\newcolumntype{L}{>{$}l<{$}}
\newtheorem{theorem}{Theorem}[section]
\newtheorem{proposition}[theorem]{Proposition}
\newtheorem{lemma}[theorem]{Lemma}
\newtheorem{corollary}[theorem]{Corollary}
\newtheorem{definition}[theorem]{Definition}
\theoremstyle{remark}
\newtheorem*{remark}{Remark}
 \numberwithin{equation}{section}
\def \R {{\mathbb R}}
\def \N {{\mathbb N}}
\def \Z {{\mathbb Z}}
\def \Q {{\mathbb Q}}
\begin{document}

\title{On the Number of Lattice Points in Thin Sectors}
\author{Ezra Waxman}
\address{Department of Mathematics, University of Haifa, 3498838 Haifa, Israel.}
\email{ezrawaxman@gmail.com}
\author{Nadav Yesha}
\address{Department of Mathematics, University of Haifa, 3498838 Haifa, Israel.}
\email{nyesha@univ.haifa.ac.il}
\date{\today}
\subjclass[2020]{11P21, 11H06}

\keywords{Diophantine, lattice points, sectors}
\thanks{\textit{Data Availability Statement}: Data sharing is not applicable to this article as no datasets were generated or analysed during the current study.}
\thanks{\textit{Acknowledgements}: We thank Ze\'ev Rudnick and Andreas Strömbergsson for helpful discussions and comments, and the anonymous referee for useful comments and a careful read of the manuscript. This research was supported by the ISRAEL SCIENCE FOUNDATION (Grant No. 1881/20), and the first author was funded by a Zuckerman Post Doctoral Fellowship.}

\begin{abstract}
On the circle of radius $R$ centred at the origin, consider a ``thin'' sector about the fixed line $y = \alpha x$ with edges given by the lines $y = (\alpha \pm \epsilon) x$, where $\epsilon = \epsilon_R \rightarrow 0$ as $ R \to \infty $. We establish an asymptotic count for $S_{\alpha}(\epsilon,R)$, the number of integer lattice points lying in such a sector.  Our results depend both on the decay rate of $\epsilon$ and on the rationality/irrationality type of $\alpha$.  In particular, we demonstrate that if $\alpha$ is Diophantine, then $S_{\alpha}(\epsilon,R)$ is asymptotic to the area of the sector, so long as $\epsilon R^{t} \rightarrow \infty$ for some $ t<2 $.
\end{abstract}

\maketitle
\section{Introduction}
The \textit{Gauss circle problem} is the problem of determining how many integer lattice points lie inside a circle, centred at the origin, with radius $R\to\infty$.  This classical problem dates back to Gauss, who employed a simple geometric argument to show that the number of such lattice points is equal to the area of the circle, up to an error term of size $E\left(R\right) \leq 2\sqrt{2}\pi R.$  In 1906, Sierpi\'{n}ski \cite{Sierpinski} improved the bound on the error term to $E\left(R\right)=O(R^{2/3})$, and further incremental improvements have been subsequently made throughout the years.  The current state-of-the-art bound, due to Bourgain and Watt \cite{BourgainWatt}, is that $E\left(R\right)=O\left(R^{t+\varepsilon}\right)$ for any $\varepsilon>0$, where $t=517/824 \approx 0.6274$. It is famously
conjectured that $E\left(R\right)=O(R^{1/2+\varepsilon})$, for any
$\varepsilon>0$.

A natural related problem is to determine the number of lattice points $S(R)$ inside a \textit{sector} $\text{Sect}(R)$ of
a circle with radius $R\to\infty$. For sectors with \emph{fixed}
open angle, Gauss's argument can be easily extended to show that
\[
S\left(R\right)=\text{Area}\left(\text{Sect}\left(R\right)\right)+E\left(R\right),
\]
where $E\left(R\right)=O\left(R\right).$ Nowak  \cite{Nowak} (who, more generally, considered sectors in domains of the form $\left\{x^{n}+y^{n}\leq R^{n} : x,y\ge 0 \right\}$
for any $n\ge2)$ showed that the error term can be
improved 
when the slopes of the sector's two respective edges are either rational or irrational
of \emph{finite type} (see Definition \ref{irrationality measure def}).  Specifically, when both slopes are \emph{Diophantine}  (i.e. of type $\eta=1+\varepsilon$ for any $ \varepsilon >0 $), we have $ E(R)=O(R^{2/3-\delta}) $ for a certain (small) $ \delta>0 $.  Under a suitable assumption on the irrationality type of the edges' slopes, these results were further extended by
Kuba \cite{Kuba} to segments of even more general domains.
An additional closely related problem, dating back to the work of Hardy and Littlewood \cite{HardyLittlewood1,HardyLittlewood2},
concerns the number of lattice points in right-angled triangles.  An asymptotic
formula for this count $-$ which plays an important role in the proofs of \cite{Kuba} and \cite{Nowak} $-$ is obtained by applying Koksma's inequality together with standard discrepancy estimates (see, e.g. \cite[Theorem 3.2, p. 123 and Theorem 5.1, p. 143]{KuipersNiederreiter}).

In this paper we are  interested in counting the number of lattice points, $S_{\alpha}\left(\epsilon,R\right)$,
lying within a sector whose
open angle \emph{shrinks} as $R\to\infty$.  More explicitly, we consider a sector $\text{Sect}_{\alpha,\epsilon}\left(R\right)$ about the fixed line $y=\alpha x$ with edges given by the lines $y=\left(\alpha\pm\epsilon\right)x$, where now $\epsilon=\epsilon_{R}\to 0$ as $R\to\infty$. Our main goal is to establish an asymptotic formula for
$S_{\alpha}\left(\epsilon,R\right)$ rather than to optimize the relevant error term. In contrast to the case of fixed sectors, our results depend only on the rationality/irrationality type of $\alpha$, and \emph{not} on the rationality/irrationality type of $\alpha\pm\epsilon$, the slopes of the two edges. For this reason, the results of \cite{Kuba} and \cite{Nowak} are not applicable for our problem, and our argument proceeds in quite a different direction.

If $\epsilon \rightarrow 0$ at a rate slower than $1/R$, then upon applying a geometric argument similar to that used in the Gauss circle problem, we find that $S_{\alpha}\left(\epsilon,R\right) \sim \text{Area}\left(\text{Sect}_{\alpha,\epsilon}(R)\right)$ (see Theorem \ref{Slow Sectors} below). To obtain an asymptotic count for more quickly shrinking sectors, we must apply an alternative method.  First, we approximate $S_{\alpha}\left(\epsilon,R\right)$ by $\Delta_{\alpha}(\epsilon,R)$, the number of lattice point lying within a thin triangle whose two long edges lie along the lines $y=\left(\alpha\pm\epsilon\right)x$.  We then fix a rational number $p/q \in \Q$ that well-approximates $\alpha$, and compute $\Delta_{\alpha}(\epsilon,R)$ by summing the contributions from lattice points lying on a discrete collection of lines, each of which has rational slope $p/q$.

When $\alpha \in \Q$ is rational, we obtain an asymptotic for $S_{\alpha}\left(\epsilon,R\right)$, regardless of how fast our sectors shrink.  This is due to the fact that, in such a case, all the lattice points in $\text{Sect}_{\alpha,\epsilon}(R)$ lie precisely on the line $ y=\alpha x$ once $\text{Sect}_{\alpha,\epsilon}(R)$ is sufficiently thin.  If $ \alpha $ is irrational of finite type $\eta$, we obtain an asymptotic for $S_{\alpha}\left(\epsilon,R\right)$ under the assumption that $\epsilon$ decays at a rate slower than $ 1/R^{1+1/\eta}$ (Theorem \ref{main theorem} below).  Specifically, when $\alpha \in \R$ is Diophantine, we obtain an asymptotic for $S_{\alpha}\left(\epsilon,R\right)$ under the assumption that  $\epsilon \rightarrow 0$ at a rate slower than $ 1/R^t$ for some $ t<2 $.  

The behaviour of lattice points in even faster shrinking sectors about irrational slopes is a more subtle question.  If $\epsilon$ decays at a rate $1/R^{1+1/\eta}$ or faster, the above method fails to produce an asymptotic count for $S_{\alpha}\left(\epsilon,R\right)$.
However, if $\epsilon$ shrinks \textit{sufficiently} quickly, then the count once again becomes much simpler.  Specifically, when $\epsilon$ decays faster than $1/R^{1+\eta}$, we may apply an elementary argument to prove that for sufficiently large $R$, $\text{Sect}_{\alpha,\epsilon}(R)$ contains no lattice points whatsoever (Proposition \ref{quickly shrinking}).  A related question concerns the distribution of lattice points in a \textit{randomly} chosen sector of width $\epsilon \asymp 1/R^2$.  This interesting question has been addressed by Marklof and Strömbergsson \cite{MarklofStrombergsson}, who successfully applied tools from homogeneous dynamics to prove the existence of a (non-Poissonian) limiting distribution for the number of lattice points in such sectors.

\subsection{Notation}\label{sec:main_results}

Fix $\alpha \in \R$, and consider the interval $I_{\epsilon}(\alpha):= \left(\alpha - \epsilon,\alpha+\epsilon \right)$, for some $\epsilon > 0$.
Let
\begin{align*}
\textnormal{Sect}_{\alpha, \epsilon}(R)&:=\{(x,y) \in \R_{>0}\times \R: x^{2}+y^{2} \leq R^{2}, y/x \in I_{\epsilon}(\alpha)\}
\end{align*}
denote the sector of radius $R$ with edges given by the lines $y=\left(\alpha\pm\epsilon\right)x$, which has an open angle of size
\[\theta :=\tan^{-1}(\alpha+\epsilon)-\tan^{-1}(\alpha-\epsilon).\] 
In what follows, we view $\epsilon = \epsilon_R $ as a function of $R$. Our main interest will be in \textit{thin} sectors, i.e. when $ \epsilon \to 0 $ as $ R \to \infty $.
Taylor expanding about $\alpha$, we find that as $ \epsilon \to 0 $, the area of $\textnormal{Sect}_{\alpha, \epsilon}(R)$ is equal to
\begin{align*}
\textnormal{Area}(\textnormal{Sect}_{\alpha, \epsilon}(R)) &= \frac{R^2}{2}\left(\tan^{-1}(\alpha + \epsilon)-\tan^{-1}(\alpha - \epsilon)\right)= \frac{\epsilon R^{2}}{1+\alpha^{2}} + O\left(\epsilon^{3}R^{2} \right).
\end{align*}

Let
\begin{align*}
S_{\alpha}(\epsilon,R)&:= \#\{\Z^{2} \cap \textnormal{Sect}_{\alpha,\epsilon}(R) \}\\
&\phantom{:}= \#\{(m,n) \in \mathbb{Z}_{>0} \times \Z: m^{2}+n^{2} \leq R^{2}, |n - \alpha m| < m\epsilon\}
\end{align*}
count the number of integer lattice points in $\textnormal{Sect}_{\alpha,\epsilon}(R)$.

We are interested in the value of $S_{\alpha}(\epsilon,R)$ in the limit as $R \rightarrow \infty$.  For example, we may consider the case $\epsilon:= R^{-\lambda}$ for some fixed $\lambda \geq 0$.  We then classify our sectors based on the decay rate of $\epsilon$.

\begin{remark}
Our results may be easily extended to more general sectors about the line $ y=\alpha x$. In particular, we note that Theorem \ref{main theorem} continues to hold when counting lattice points in any sector of the form 
\[\textnormal{Sect}_{\alpha,\epsilon_{1},\epsilon_{2}}(R):=\{(x,y) \in \R_{>0} \times \R: x^2+y^2 \leq R^2, y/x\in(\alpha-\epsilon_1, \alpha+\epsilon_2)\},\]
where, say, $\epsilon_{1} \asymp \epsilon_{2} \asymp \epsilon$.  Consequently, one may alternatively consider a sector centred about the angle $\Phi:=\tan^{-1}(\alpha)$ with radius $R$  and open angle $\theta \asymp \epsilon$; and express the resulting lattice point count in terms of the properties of $\tan{\Phi}$ and the decay rate of $\theta$ without any alterations to Theorem \ref{main theorem}.  Nonetheless, we have chosen to formulate our results in terms of slopes, rather than angles, in order to simplify the exposition, and because our analysis naturally depends upon the Diophantine properties of the slope $\alpha$.
\end{remark}
	
\subsection{``Slowly" Shrinking Sectors}
Suppose first that $ \epsilon $ is either fixed or decays slower than $ 1/R $, in the sense that $\epsilon R \rightarrow \infty$ in the limit as $R \rightarrow \infty$ (e.g. $0 \leq \lambda < 1$).  Upon refining the elementary geometric argument of the $ O(R) $ bound for the error term in the Gauss circle problem, we obtain the following result, which yields an asymptotic count for the number of lattice points in such slowly shrinking sectors:

\begin{theorem}\label{Slow Sectors}
Fix $\alpha \in \mathbb{R}$, and assume that $ \epsilon R \to \infty $ as $ R \to \infty$. Then
\begin{equation}\label{asymptotic slow shrinking}
S_{\alpha}(\epsilon,R)= \textnormal{Area}(\textnormal{Sect}_{\alpha, \epsilon}(R))+O\left(R\right).\end{equation}
\end{theorem}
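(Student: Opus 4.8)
The plan is to adapt Gauss's classical lattice-point counting argument to the sector $\textnormal{Sect}_{\alpha,\epsilon}(R)$, paying attention only to the size of its perimeter. For $(m,n)\in\Z^{2}$ let $Q_{m,n}:=(m,n)+[-1/2,1/2)^{2}$ be the half-open unit square centred at $(m,n)$. These squares tile the plane, so writing $U:=\bigcup_{(m,n)\in\textnormal{Sect}_{\alpha,\epsilon}(R)}Q_{m,n}$ we have
\[
S_{\alpha}(\epsilon,R)=\#\{\Z^{2}\cap\textnormal{Sect}_{\alpha,\epsilon}(R)\}=\area(U),
\]
and consequently $|S_{\alpha}(\epsilon,R)-\area(\textnormal{Sect}_{\alpha,\epsilon}(R))|\le\area\bigl(U\triangle\textnormal{Sect}_{\alpha,\epsilon}(R)\bigr)$. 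The whole problem thus reduces to bounding the area of this symmetric difference.

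First I would check that $U\triangle\textnormal{Sect}_{\alpha,\epsilon}(R)$ lies in the $d$-neighbourhood of $\partial\,\textnormal{Sect}_{\alpha,\epsilon}(R)$, where $d=1/\sqrt{2}$ is the circumradius of a unit square: if a point $x$ lies in exactly one of $U$ and $\textnormal{Sect}_{\alpha,\epsilon}(R)$ and $(m,n)$ is the lattice point with $x\in Q_{m,n}$, then $(m,n)$ lies in $\textnormal{Sect}_{\alpha,\epsilon}(R)$ exactly when $x$ does not, so the segment joining $x$ to $(m,n)$ — of length at most $d$ — must cross $\partial\,\textnormal{Sect}_{\alpha,\epsilon}(R)$, whence $x$ is within $d$ of the boundary. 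Next I would split $\partial\,\textnormal{Sect}_{\alpha,\epsilon}(R)$ into its three pieces: the two radial edges along $y=(\alpha\pm\epsilon)x$, each of length $R$, and the circular arc, of length $2R\theta=\tfrac{2\epsilon R}{1+\alpha^{2}}+O(\epsilon^{3}R)$. Using that the $d$-neighbourhood of a segment of length $\ell$ has area $2d\ell+\pi d^{2}$, and that of a rectifiable arc of length $\ell$ has area $O(d\ell+d^{2})$, subadditivity of area gives
\[
\area\bigl(U\triangle\textnormal{Sect}_{\alpha,\epsilon}(R)\bigr)\le 2\bigl(2dR+\pi d^{2}\bigr)+O(\epsilon R+1)=O(R),
\]
the final estimate using only that $\epsilon$ is eventually bounded (indeed $\epsilon\to0$). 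This is precisely \eqref{asymptotic slow shrinking}. The hypothesis $\epsilon R\to\infty$ plays no role in the displayed bound itself; it enters only to make the conclusion meaningful, since combined with the area estimate $\area(\textnormal{Sect}_{\alpha,\epsilon}(R))=\tfrac{\epsilon R^{2}}{1+\alpha^{2}}+O(\epsilon^{3}R^{2})$ recorded above it forces $\area(\textnormal{Sect}_{\alpha,\epsilon}(R))\gg R$, so that the error term is of strictly smaller order than the main term.

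Every step here is elementary, so I do not expect a genuine obstacle; the one point that calls for a little care is the cusp of the sector at the origin, where the two radial edges (hence their $d$-neighbourhoods) nearly coincide and where the sector itself is extremely thin. This causes no difficulty because we bound the neighbourhood of the full boundary by the \emph{sum} of the neighbourhoods of its three pieces — overlaps only shrink the union — so the sliver of $\textnormal{Sect}_{\alpha,\epsilon}(R)$ lying near the origin is automatically absorbed into the neighbourhoods of the two radial edges and needs no separate treatment. This is the sole respect in which the argument refines the classical one, and it is also why the leading constant $2\sqrt{2}$ here comes from two radial edges of total length $2R$, in place of the circumference $2\pi R$ of the full circle in the Gauss circle problem.
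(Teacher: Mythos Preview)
Your argument is correct and shares the paper's basic Gauss-style box-counting idea, but the execution differs in a way worth noting. The paper sandwiches the union of unit squares between two sectors $\textnormal{Sect}^{\pm}_{\alpha,\epsilon}$ having the \emph{same} opening angle $2\theta$ but radii $R\pm\bigl(\frac{\sqrt{2}}{2\sin\theta}+\frac{\sqrt{2}}{2}\bigr)$: because the $\sqrt{2}/2$-neighbourhood of the sector near its vertex protrudes a distance $\asymp 1/\theta$ behind the origin, the bounding sectors must be dilated by this much, and one computes $\theta\bigl(R+O(1/\theta)\bigr)^{2}=\theta R^{2}+O(R)+O(1/\theta)$. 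Absorbing the $O(1/\theta)$ term into $O(R)$ (and ensuring the inner sector is nonempty) genuinely uses the hypothesis $\epsilon R\to\infty$. Your route---bounding $U\triangle\textnormal{Sect}_{\alpha,\epsilon}(R)$ directly by the $d$-neighbourhood of the boundary and summing over its three pieces---sidesteps the cusp entirely; as you correctly observe, the hypothesis then plays no role in the error bound itself and serves only to make the asymptotic nontrivial. Both arguments are elementary, but yours is slightly more efficient and in fact proves the estimate \eqref{asymptotic slow shrinking} without any assumption on the decay of $\epsilon$.
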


\subsection{``Quickly" Shrinking Sectors}
In our investigation of more quickly shrinking sectors, our results depend heavily upon the rationality/irrationality type of $\alpha$, defined as follows:

\begin{definition}\label{irrationality measure def}
	We say that an irrational $\alpha \in \R$ is of finite \textbf{type} $\eta$, if there exists a constant $c=c(\alpha)>0$ such that
	\[\bigg |\alpha - \frac{p}{q}\bigg |>  \frac{c}{q^{1+\eta}}\]
	for all integers pairs $(p,q) \in \Z \times \Z_{>0}$.
\end{definition}

Note that for irrational $\alpha \in \R$ of type $\eta$ we necessarily have $\eta \geq 1$
by Dirichlet's theorem.  We say that $\alpha$ is \textbf{Diophantine} if $\alpha \in \R$ is irrational of type $\eta = 1+\varepsilon$ for every $\varepsilon > 0$.
It is well-known that almost all $\alpha \in \R$ are Diophantine (Khinchin's theorem), and every algebraic number is Diophantine (Roth's theorem).

\subsubsection{Irrational Slopes}
For irrational $\alpha \in \R$, our main result is as follows:

\begin{theorem}\label{main theorem}
Let $\alpha \in \R$ be irrational of finite type $\eta$, and assume that $ \epsilon \to 0 $, as well as that $ \epsilon R^{1+1/\eta} \to \infty$ as $ R\to \infty $. Then
\[S_{\alpha}(\epsilon,R) = \textnormal{Area}(\textnormal{Sect}_{\alpha, \epsilon}(R)) +O\left(\epsilon^{\frac{1}{1+\eta}} R+R^{2}\epsilon^{2}\right),\]
in the limit as $R \rightarrow \infty$.
\end{theorem}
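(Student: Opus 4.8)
The plan is to reduce the sector count $S_{\alpha}(\epsilon,R)$ to a lattice-point count inside a thin \emph{triangle}, and then to evaluate the latter by fibering over lines of a well-chosen rational slope $p/q\approx\alpha$. First I would replace the circular arc bounding $\textnormal{Sect}_{\alpha,\epsilon}(R)$ by the vertical chord $x = R/\sqrt{1+\alpha^2}$ (or a slight perturbation thereof). Since the region between the arc and the chord, intersected with the sector, has $x$-extent $O(\epsilon^2 R)$ and $y$-extent $O(\epsilon R)$, it contains $O(\epsilon^3 R^2 + \epsilon R + \epsilon^2 R^2 + 1)$ lattice points; one must check this is absorbed into the claimed error $O(\epsilon^{1/(1+\eta)}R + \epsilon^2 R^2)$, using the hypothesis $\epsilon R^{1+1/\eta}\to\infty$ (which forces $\epsilon R \gg \epsilon^{1/(1+\eta)}R \cdot (\text{something})$; more precisely $\epsilon^{1/(1+\eta)} \ge \epsilon$ since $\epsilon\to 0$, so $\epsilon R \ll \epsilon^{1/(1+\eta)} R$, but we also need $\epsilon^3 R^2$ and $\epsilon^2 R^2$ controlled — $\epsilon^3 R^2 \le \epsilon^2 R^2$ trivially). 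This leaves $\Delta_{\alpha}(\epsilon,R)$, the number of $(m,n)\in\Z_{>0}\times\Z$ with $0 < m \le R/\sqrt{1+\alpha^2}$ (up to lower-order adjustments) and $|n-\alpha m| < m\epsilon$.

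Next, using that $\alpha$ has type $\eta$, I would choose a rational $p/q$ with $|\alpha - p/q| < c/q^{1+\eta}$ and $q$ of size roughly $\epsilon^{-1/(1+\eta)}$, so that $|\alpha - p/q| \lesssim \epsilon$ — this is the key balancing choice. Then for each residue class and each line $\{(m,n): qn - pm = k\}$ with $k$ ranging over an interval of length $O(q\epsilon R)$ (coming from the condition $|n-\alpha m|<m\epsilon$ translated into the $p/q$-coordinate), I would count the lattice points on that line inside the truncated $m$-range. On each such line the points are equally spaced with $m$-gap $q$, so the count is $\frac{1}{q}\cdot(\text{length of }m\text{-interval}) + O(1)$, and summing over the $O(q\epsilon R)$ admissible values of $k$ gives the main term $\epsilon R^2/(1+\alpha^2)$ up to an error of $O(q\epsilon R) = O(\epsilon^{1-1/(1+\eta)}R)=O(\epsilon^{\eta/(1+\eta)}R)$ from the $O(1)$ per line, plus an error from the discrepancy between $\alpha$ and $p/q$ of size $O(|\alpha - p/q|\cdot R^2) = O(\epsilon R^2/q^{1+\eta}\cdot q^{1+\eta}\cdots)$ — here care is needed, since the $m$-interval endpoints depend on the true slope $\alpha$ while the lattice structure sees $p/q$, producing a secondary term that I expect to contribute the $O(\epsilon^2 R^2)$. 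I would also need the condition $\epsilon R^{1+1/\eta}\to\infty$ precisely to guarantee $q\epsilon R \to \infty$, so that the interval of $k$'s is long and the "$+O(1)$ per line" losses are genuinely lower order; and to guarantee $q = o(R)$ so that each line actually meets the $m$-range in $\gg 1$ points.

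The main obstacle, I expect, is bookkeeping the interaction between two incommensurable scales: the slope $\alpha$ governs which lattice points lie in the sector, but the \emph{counting} is done along $p/q$-lines, and the mismatch $\alpha - p/q$ enters both the per-line count (through the tilted $m$-interval) and the number of lines. Keeping this mismatch at the $O(\epsilon)$ level via the type-$\eta$ hypothesis is what pins down the denominator $q \asymp \epsilon^{-1/(1+\eta)}$, and one must verify that no worse term than $O(\epsilon^{1/(1+\eta)}R + \epsilon^2 R^2)$ survives — in particular that summing the per-line $O(1)$ errors over $\asymp q\epsilon R$ lines gives $O(q\epsilon R)$, which is $O(\epsilon^{\eta/(1+\eta)}R)$ and hence $o(\epsilon^{1/(1+\eta)}R)\cdot\epsilon^{-1}$... one checks $\epsilon^{\eta/(1+\eta)} \le \epsilon^{1/(1+\eta)}$ iff $\eta\ge 1$, which holds, so in fact this term is \emph{dominated} by $\epsilon^{1/(1+\eta)}R$ — wait, that inequality goes the wrong way since $\epsilon<1$; I would therefore double-check whether the dominant line-count error is $O(\epsilon^{1/(1+\eta)}R)$ directly (e.g. by also counting the contribution of lines near the triangle's apex, where $m$ is small, separately). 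Finally, I would combine the triangle estimate with the arc-to-chord correction and the Taylor expansion $\textnormal{Area} = \epsilon R^2/(1+\alpha^2) + O(\epsilon^3 R^2)$ to recover the stated formula.
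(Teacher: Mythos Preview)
Your overall architecture matches the paper's exactly: replace the sector by the triangle $\Delta_\alpha(\epsilon,R)$ (the paper's Lemma~\ref{S and Delta} gives $S_\alpha = \Delta_\alpha + O(1+(R\epsilon)^2)$), then fibre over lines $qn-pm=d$ for a convergent $p/q$ of $\alpha$, sum the arithmetic-progression counts, and optimise $q$. The paper obtains
\[
\Delta_\alpha(\epsilon,R)=\frac{\epsilon R^2}{1+\alpha^2}+O\!\left(\frac{R}{q}+\frac{1}{\epsilon q^2}+\epsilon q R+1\right),
\]
exactly the terms you anticipate (the $O(1)$-per-line losses give $\epsilon qR$, and the $R/q$ and $1/(\epsilon q^2)$ come from the $d=0$ line and the endpoint rounding).

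Where your sketch wobbles is in the mechanism for choosing $q$. You write ``choose $p/q$ with $|\alpha-p/q|<c/q^{1+\eta}$ and $q\asymp\epsilon^{-1/(1+\eta)}$''. The first inequality is the \emph{negation} of type $\eta$; no such $p/q$ exists. What actually secures $|\delta|<\epsilon/2$ is the continued-fraction gap: pick the convergent $p_i/q_i$ with $q_i\le X<q_{i+1}$ for $X\asymp\epsilon^{-\eta/(1+\eta)}$. Then $|\delta|<1/(q_iq_{i+1})<1/(q_iX)$, and the type-$\eta$ \emph{lower} bound $|\delta|>c/q_i^{1+\eta}$ forces $q_{i+1}<q_i^\eta/c$, hence $q_i\gg X^{1/\eta}$; together these give $|\delta|\ll X^{-1-1/\eta}\asymp\epsilon$. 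Crucially, you only know $X^{1/\eta}\ll q_i\le X$, so $q_i$ may be as large as $\epsilon^{-\eta/(1+\eta)}$, not $\epsilon^{-1/(1+\eta)}$ as you posit. The dominant error is then $\epsilon q R\le\epsilon X R\asymp\epsilon^{1/(1+\eta)}R$, and the other terms $R/q$ and $1/(\epsilon q^2)$ are bounded by the same quantity via $1/q\le\epsilon X/2$. This resolves your uncertainty in the last paragraph: the claimed error $\epsilon^{1/(1+\eta)}R$ is exactly $\epsilon X R$ with the correct scale of $X$, not a bound on $\epsilon^{\eta/(1+\eta)}R$.
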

The conditions $ \epsilon \to 0 $ and  $ \epsilon R^{1+1/\eta} \to \infty$ (e.g. $ 0 < \lambda < 1+1/\eta)$ consequently guarantee the asymptotic
\begin{equation}\label{eq:main_asymptotic}
S_{\alpha}(\epsilon,R) \sim \textnormal{Area}(\textnormal{Sect}_{\alpha, \epsilon}(R)).
\end{equation}
In particular, if $\alpha$ is Diophantine, then \eqref{eq:main_asymptotic} holds whenever $ \epsilon \to 0 $ and $\epsilon R^{t} \rightarrow \infty$ for some $ t<2 $ (e.g. $0 < \lambda < 2$).  Note furthermore that $\textnormal{Area}(\textnormal{Sect}_{\alpha, \epsilon}(R))$ grows if and only if $\epsilon R^{2} \rightarrow \infty$, and thus our results in such a case are essentially optimal (and ``strictly" so whenever $\alpha \in \R$ is a badly approximable irrational, i.e. irrational of type $\eta = 1$).\\
\\
Note that Theorem \ref{main theorem} gives a better error term than \eqref{asymptotic slow shrinking} whenever $ \epsilon = o(R^{-1/2}) $ and $ \epsilon R \to \infty $ (e.g. $ 1/2<\lambda <1 $). Upon comparing the error terms in Theorem \ref{Slow Sectors} and Theorem \ref{main theorem} we obtain the following corollary:

\begin{corollary}\label{main corollary}
Let $\alpha \in \R$ be irrational of finite type $\eta$, and let $ \epsilon= R^{-\lambda} $.  Then in the limit as $R \rightarrow \infty$,

\begin{equation}
S_{\alpha}(\epsilon,R) = \left\{
\begin{array}{l l l}
\textnormal{Area}(\textnormal{Sect}_{\alpha, \epsilon}(R))+O(R), &  0 \leq \lambda < \frac{1}{2} \\
\textnormal{Area}(\textnormal{Sect}_{\alpha, \epsilon}(R))+O\left(R^{2-2 \lambda}\right), & \frac{1}{2} \leq \lambda < \frac{1+\eta}{1+2 \eta}\\
\textnormal{Area}(\textnormal{Sect}_{\alpha, \epsilon}(R))+O\left(R^{1-\frac{\lambda}{1+\eta}}\right), &   \frac{1+\eta}{1+2 \eta} \leq \lambda < 1+\frac{1}{\eta}.
\end{array} \right.
\end{equation}
\end{corollary}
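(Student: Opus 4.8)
The plan is to simply combine Theorem \ref{Slow Sectors} and Theorem \ref{main theorem}, specialized to $\epsilon = R^{-\lambda}$, and then to compare the sizes of the resulting error terms as a function of $\lambda$. First, for $0 \le \lambda < 1/2$ one has $\epsilon R = R^{1-\lambda} \to \infty$, so Theorem \ref{Slow Sectors} applies directly and yields the error term $O(R)$ (the endpoint $\lambda = 0$, i.e.\ fixed $\epsilon$, is covered here as well). This establishes the first line.

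For the remaining range $1/2 \le \lambda < 1 + 1/\eta$, observe that $\epsilon = R^{-\lambda} \to 0$ and $\epsilon R^{1+1/\eta} = R^{1 + 1/\eta - \lambda} \to \infty$, so the hypotheses of Theorem \ref{main theorem} are satisfied. Substituting $\epsilon = R^{-\lambda}$ into its error term gives
\[
S_\alpha(\epsilon, R) = \textnormal{Area}(\textnormal{Sect}_{\alpha,\epsilon}(R)) + O\!\left(R^{1 - \frac{\lambda}{1+\eta}} + R^{2 - 2\lambda}\right).
\]
It then remains only to decide which of the two exponents $1 - \lambda/(1+\eta)$ and $2 - 2\lambda$ is the larger. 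A short computation shows $1 - \lambda/(1+\eta) \ge 2 - 2\lambda$ exactly when $\lambda \ge \frac{1+\eta}{1+2\eta}$; moreover $\tfrac12 < \frac{1+\eta}{1+2\eta} < 1 + \tfrac1\eta$ for every $\eta \ge 1$, so this threshold lies strictly inside our range. Hence the error term simplifies to $O(R^{2-2\lambda})$ on $\tfrac12 \le \lambda < \frac{1+\eta}{1+2\eta}$ and to $O(R^{1 - \lambda/(1+\eta)})$ on $\frac{1+\eta}{1+2\eta} \le \lambda < 1 + \tfrac1\eta$, which are precisely the second and third lines.

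There is essentially no obstacle here beyond this bookkeeping. The one point worth checking is that on the overlap $\tfrac12 \le \lambda < 1$, where both theorems apply, Theorem \ref{main theorem} indeed gives the better (or equal) bound — which holds since $R^{2-2\lambda} \le R$ for $\lambda \ge \tfrac12$ and $R^{1 - \lambda/(1+\eta)} < R$ for $\lambda > 0$; and that for $1 \le \lambda < 1 + 1/\eta$, where $\epsilon R \not\to\infty$ so that only Theorem \ref{main theorem} is available, that theorem still applies and produces the third-line bound. It is also worth noting that the exponents agree at the breakpoints $\lambda = \tfrac12$ and $\lambda = \frac{1+\eta}{1+2\eta}$, so the three regimes match up continuously and there is no gap between them.
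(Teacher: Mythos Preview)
Your proof is correct and follows exactly the approach indicated in the paper, which simply states that the corollary is obtained ``upon comparing the error terms in Theorem~\ref{Slow Sectors} and Theorem~\ref{main theorem}''; you have carried out this comparison carefully, including verifying the crossover threshold $\lambda = \frac{1+\eta}{1+2\eta}$ and checking the applicability of each theorem on the relevant ranges.
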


 In particular, when $\alpha \in \R$ is Diophantine, Corollary \ref{main corollary} yields

\begin{equation}
S_{\alpha}(\epsilon,R) = \left\{
\begin{array}{l l l}
\textnormal{Area}(\textnormal{Sect}_{\alpha, \epsilon}(R))+O(R), & 0 \leq \lambda <\frac{1}{2} \\
\textnormal{Area}(\textnormal{Sect}_{\alpha, \epsilon}(R))+O\left(R^{2-2\lambda}\right), & \frac{1}{2} \leq \lambda <\frac{2}{3}\\
\textnormal{Area}(\textnormal{Sect}_{\alpha, \epsilon}(R))+O_\delta\left(R^{1-\lambda/2+\delta}\right), &  \frac{2}{3} \leq \lambda < 2.
\end{array} \right.
\end{equation}

\subsubsection{Rational Slopes}
For rational $\alpha \in \Q$, we obtain the following result:

\begin{theorem}\label{rational case thm}
Fix $\alpha = p/q \in \Q$, where $q>0$ and $(p,q)=1$.  Then in the limit as $R \rightarrow \infty$, we have
\begin{align*}
S_{\alpha}(\epsilon,R)=
\frac{\epsilon q^{2}R^{2}}{p^{2}+q^{2}} +\frac{1}{q^{2}\epsilon}\left\{\frac{\epsilon q^2 R}{\sqrt{p^{2}+q^{2}}}\right\}\left(1-\left\{\frac{\epsilon q^2 R}{\sqrt{p^{2}+q^{2}}}\right\}\right)+O\left(1+(R\epsilon)^{2}\right),
\end{align*}
where $\{x\}:=x - \lfloor x \rfloor$ denotes the fractional part of $x$.
\end{theorem}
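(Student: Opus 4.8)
The plan is to sort the lattice points of $\textnormal{Sect}_{\alpha,\epsilon}(R)$ according to the line of slope $\alpha=p/q$ on which they lie, and to count along each such line — the rational instance of the strategy sketched in the introduction. Two harmless reductions come first: if $\epsilon\ge\epsilon_0$ for a suitable constant $\epsilon_0=\epsilon_0(p,q)>0$ then all three terms of the asserted formula are $O(R^2)=O((\epsilon R)^2)$ and there is nothing to prove, so we may assume $\epsilon<\epsilon_0$; and reflecting in the $x$-axis gives $S_\alpha(\epsilon,R)=S_{-\alpha}(\epsilon,R)$, so we may assume $p\ge 0$. Now write each $(m,n)\in\Z_{>0}\times\Z$ on the line $L_j:=\{(x,y):qy-px=j\}$ with $j:=qn-pm\in\Z$; since $(p,q)=1$, the lattice points of $L_j$ are exactly $\{(m_j+qt,\,n_j+pt):t\in\Z\}$, so their $x$-coordinates fill one residue class mod $q$. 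On $L_j$ one has $n/m=\alpha+j/(qm)$, so $y/x\in I_\epsilon(\alpha)$ is equivalent to $m>|j|/(q\epsilon)$, while $m^2+n^2\le R^2$ is equivalent to $m_-(j)\le m\le m_+(j)$ with $m_\pm(j)=\big(-pj\pm q\sqrt{(p^2+q^2)R^2-j^2}\,\big)/(p^2+q^2)$; since $m_-(j)<0$ throughout the relevant range $|j|=O(\epsilon R)$, the points of $L_j\cap\textnormal{Sect}_{\alpha,\epsilon}(R)$ are those with $x$-coordinate in $(|j|/(q\epsilon),\,m_+(j)]$. Counting integers in an arithmetic progression gives, exactly,
\[
c(j):=\#\big(L_j\cap\Z^2\cap\textnormal{Sect}_{\alpha,\epsilon}(R)\big)=\frac{m_+(j)}{q}-\frac{|j|}{q^2\epsilon}+\theta_j,\qquad|\theta_j|<1\quad(j\ne 0),
\]
while $c(0)=\lfloor R/\sqrt{p^2+q^2}\rfloor$.

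Solving $m_+(\pm j)=|j|/(q\epsilon)$ yields the cutoff $N^{\pm}:=q^2\epsilon R/\sqrt{(p\pm q\epsilon)^2+q^2}$ (with sign matching $\operatorname{sgn}j$): $c(j)=0$ once $j>N^{+}$, so $S_\alpha(\epsilon,R)=c(0)+\sum_{j=1}^{\lfloor N^{+}\rfloor}c(j)+\sum_{i=1}^{\lfloor N^{-}\rfloor}c(-i)$. Put $N:=q^2\epsilon R/\sqrt{p^2+q^2}=\epsilon q^2 R/\sqrt{p^2+q^2}$; a direct estimate gives $N^{\pm}=N+O(\epsilon^2 R)$ and $N^{\pm}=O(\epsilon R)$. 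The $\theta_j$-sum is $O(\epsilon R)=O(1+(\epsilon R)^2)$ by the trivial bound. For the rest, expand $m_+(j)/q=R/\sqrt{p^2+q^2}-pj/\big(q(p^2+q^2)\big)-j^2/\big(2(p^2+q^2)^{3/2}R\big)+O(j^4/R^3)$; summing over $1\le|j|\le\lfloor N^{\pm}\rfloor=O(\epsilon R)$, the linear, quadratic and quartic contributions are each $O((\epsilon R)^2)$, so $\sum_{j=1}^{\lfloor N^{\pm}\rfloor}m_+(\pm j)/q=\lfloor N^{\pm}\rfloor\,R/\sqrt{p^2+q^2}+O((\epsilon R)^2)$, whereas $\sum_{j=1}^{\lfloor N^{\pm}\rfloor}|j|/(q^2\epsilon)=\lfloor N^{\pm}\rfloor(\lfloor N^{\pm}\rfloor+1)/(2q^2\epsilon)$ must be kept exactly. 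Using $R/\sqrt{p^2+q^2}=N/(q^2\epsilon)$ and $c(0)=N/(q^2\epsilon)+O(1)$, this gives
\[
S_\alpha(\epsilon,R)=\frac{1}{q^2\epsilon}\left(N+\sum_{\pm}\Big(\lfloor N^{\pm}\rfloor N-\tfrac12\lfloor N^{\pm}\rfloor(\lfloor N^{\pm}\rfloor+1)\Big)\right)+O\big(1+(\epsilon R)^2\big).
\]

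It remains to simplify the bracket. Writing $\lfloor N^{\pm}\rfloor=N^{\pm}-\{N^{\pm}\}$ and substituting $N^{\pm}=N+O(\epsilon^2 R)$, a short computation yields $\lfloor N^{\pm}\rfloor N-\tfrac12\lfloor N^{\pm}\rfloor(\lfloor N^{\pm}\rfloor+1)=\tfrac12(N^2-N)+\tfrac12\{N^{\pm}\}(1-\{N^{\pm}\})+O(\epsilon^2R+\epsilon^4R^2)$; since $t\mapsto\{t\}(1-\{t\})$ is $1$-Lipschitz and bounded by $\tfrac14$, $\{N^{\pm}\}(1-\{N^{\pm}\})=\{N\}(1-\{N\})+O(\min(\epsilon^2 R,1))$. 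Adding the two signs and the leftover $N$,
\[
N+\sum_{\pm}\Big(\lfloor N^{\pm}\rfloor N-\tfrac12\lfloor N^{\pm}\rfloor(\lfloor N^{\pm}\rfloor+1)\Big)=N^2+\{N\}(1-\{N\})+O\big(\min(\epsilon^2 R,1)+\epsilon^2R+\epsilon^4R^2\big),
\]
and dividing by $q^2\epsilon$ — using that $\min(\epsilon^2 R,1)/\epsilon$, $\epsilon R$ and $\epsilon^3 R^2$ are all $O(1+(\epsilon R)^2)$, and $N^2/(q^2\epsilon)=\epsilon q^2 R^2/(p^2+q^2)$ — gives exactly the claimed formula, with $\{N\}=\{\epsilon q^2 R/\sqrt{p^2+q^2}\}$.

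The one genuinely delicate point is the treatment of the summation cutoffs. The natural upper limits are $\lfloor N^{\pm}\rfloor$, not $\lfloor N\rfloor$, and since the summand $|j|/(q^2\epsilon)$ is as large as $\asymp R$ near the cutoff, one cannot simply replace $N^{\pm}$ by $N$ inside the floor; it is precisely the estimate $N^{\pm}=N+O(\epsilon^2 R)$ together with the continuity and boundedness of $t\mapsto\{t\}(1-\{t\})$ that confines the resulting discrepancy to $O(1+(\epsilon R)^2)$. By contrast the per-line rounding errors $\theta_j$ cause no trouble, because $q$ is fixed: each is $O(1)$, and there are only $O(\epsilon R)$ lines to sum.
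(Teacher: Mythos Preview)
Your proof is correct but follows a different route from the paper. The paper first invokes Lemma~\ref{S and Delta} to replace $S_\alpha(\epsilon,R)$ by the triangle count $\Delta_\alpha(\epsilon,R)$, where the upper constraint on $m$ is the single line $m\le R/\sqrt{1+\alpha^2}$ rather than the circular arc. This makes the cutoff on $d=nq-mp$ perfectly symmetric, $|d|\le N=\epsilon q^2 R/\sqrt{p^2+q^2}$, and the whole computation reduces to the closed form $\sum_{|d|\le N}|d|=\lfloor N\rfloor(\lfloor N\rfloor+1)$, followed by a direct expansion of the floors. You instead work with the sector itself: this forces the curved upper bound $m\le m_+(j)$, asymmetric cutoffs $N^{\pm}=N+O(\epsilon^2 R)$, a Taylor expansion of $m_+(j)$, and the Lipschitz/boundedness argument for $t\mapsto\{t\}(1-\{t\})$ to pass from $\{N^{\pm}\}$ to $\{N\}$. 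What you gain is self-containment (you never need the separate triangle-approximation lemma, and your initial reduction even covers the case where $\epsilon$ does not tend to~$0$); what the paper gains is brevity and transparency, since the geometric messiness is packaged once and for all into Lemma~\ref{S and Delta} and the remaining arithmetic is a two-line identity.
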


When $\epsilon = o(R^{-1})$ (e.g. $\lambda > 1$), Theorem \ref{rational case thm} simplifies to

\[S_{\alpha}(\epsilon,R) =
\frac{R}{\sqrt{p^{2}+q^{2}}}+O(1).\]

In this case, $S_{\alpha}(\epsilon,R)$ is no longer asymptotic to $\textnormal{Area}(\textnormal{Sect}_{\alpha, \epsilon}(R))$, and the only points that contribute to $S_{\alpha}(\epsilon,R)$ are those which lie precisely on the line $y = \alpha x$.

When $\epsilon\to 0$ and $\epsilon  R \rightarrow \infty$ (e.g. $0 < \lambda < 1$), Theorem \ref{rational case thm} yields

\begin{equation}\label{rational slow shrinking}
S_{\alpha}(\epsilon,R) =\textnormal{Area}(\textnormal{Sect}_{\alpha, \epsilon}(R))+\beta/\epsilon + O(\epsilon^{2}R^{2}),
\end{equation}

where

\[\beta:=\frac{1}{q^{2}}\left\{\frac{\epsilon q^2  R}{\sqrt{p^{2}+q^{2}}}\right\}\left(1-\left\{\frac{\epsilon q^2  R}{\sqrt{p^{2}+q^{2}}}\right\}\right)\]
is a bounded function of $R$.  In particular, as in the case of irrational slopes, 
if $\epsilon = o(R^{-1/2})$ and $\epsilon R \rightarrow \infty$ (e.g. $1/2 < \lambda < 1$), then (\ref{rational slow shrinking}) yields a more precise count than (\ref{asymptotic slow shrinking}). The following corollary summarizes the above analysis in the case $ \epsilon = R^{-\lambda} $:

\begin{corollary}\label{rational corollary}
	Let $\alpha = p/q \in \Q$, where $q>0$ and $(p,q)=1$, and let $ \epsilon= R^{-\lambda} $.  Then in the limit as $R \rightarrow \infty$, we have
	
	\begin{equation}
		S_{\alpha}(\epsilon,R) = \left\{
		\begin{array}{l l l l}
			\textnormal{Area}(\textnormal{Sect}_{\alpha, \epsilon}(R))+O(R), & 0 \leq \lambda \leq \frac{1}{2} \\
			\textnormal{Area}(\textnormal{Sect}_{\alpha, \epsilon}(R))+O\left(R^{2-2\lambda}\right), & \frac{1}{2} <\lambda \leq \frac{2}{3}\\
			\textnormal{Area}(\textnormal{Sect}_{\alpha, \epsilon}(R))+\beta R^\lambda +O\left(R^{2-2\lambda}\right), & \frac{2}{3} < \lambda < 1 \\
			\frac{R}{\sqrt{p^{2}+q^{2}}}+O(1), &   1<  \lambda .
		\end{array} \right.
	\end{equation}
\end{corollary}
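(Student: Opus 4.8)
The plan is to obtain Corollary \ref{rational corollary} by specializing $\epsilon = R^{-\lambda}$ in Theorems \ref{Slow Sectors} and \ref{rational case thm} and then, on each subinterval of $\lambda$, deciding which of the competing error terms dominates. No idea beyond these two theorems is needed, so the work is essentially bookkeeping; the only place where a genuine choice must be made is which of the two theorems to invoke on the range $[0,1/2]$.

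First I would rewrite Theorem \ref{rational case thm} in a cleaner form. With $\alpha = p/q$ we have $1+\alpha^2 = (p^2+q^2)/q^2$, so the area expansion recorded in Section \ref{sec:main_results} reads $\textnormal{Area}(\textnormal{Sect}_{\alpha,\epsilon}(R)) = \frac{\epsilon q^2 R^2}{p^2+q^2} + O(\epsilon^3 R^2)$, which is exactly the leading term of Theorem \ref{rational case thm} up to an admissible error. Since $\epsilon \le 1$ for large $R$ we have $\epsilon^3 R^2 \le (R\epsilon)^2$, so Theorem \ref{rational case thm} takes the form
\[
S_\alpha(\epsilon,R) = \textnormal{Area}(\textnormal{Sect}_{\alpha,\epsilon}(R)) + \frac{\beta}{\epsilon} + O\!\left(1+(R\epsilon)^2\right),
\]
with $\beta$ as in \eqref{rational slow shrinking}; note that $0 \le \beta \le 1/(4q^2)$ because $\{x\}(1-\{x\}) \le 1/4$, so $\beta$ is bounded uniformly in $R$.

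Next I would carry out the case analysis on $\epsilon = R^{-\lambda}$. On $0 \le \lambda \le 1/2$ the hypothesis $\epsilon R \to \infty$ holds, so I would simply quote Theorem \ref{Slow Sectors} to get the error $O(R)$; this is better than the displayed reformulation, since there $O((R\epsilon)^2) = O(R^{2-2\lambda})$ exceeds $R$ for $\lambda < 1/2$. For $\lambda \ge 1/2$ I would instead use the reformulation: $\beta/\epsilon$ becomes $\beta R^\lambda$, $(R\epsilon)^2$ becomes $R^{2-2\lambda}$ (which absorbs the $O(1)$ as long as $\lambda < 1$), and the area-approximation error becomes $O(R^{2-3\lambda})$, always dominated by $R^{2-2\lambda}$. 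Since $\lambda \le 2-2\lambda$ precisely when $\lambda \le 2/3$, the term $\beta R^\lambda$ is swallowed by $O(R^{2-2\lambda})$ on $1/2 \le \lambda \le 2/3$ but must be displayed separately on $2/3 < \lambda < 1$, giving $\textnormal{Area} + \beta R^\lambda + O(R^{2-2\lambda})$ there.

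Finally, for $\lambda > 1$ I would use the exact cancellation noted after Theorem \ref{rational case thm}: here $R\epsilon = R^{1-\lambda} \to 0$, so eventually $\epsilon q^2 R/\sqrt{p^2+q^2} \in (0,1)$ coincides with its own fractional part, the middle term of Theorem \ref{rational case thm} expands as $\frac{R}{\sqrt{p^2+q^2}} - \frac{\epsilon q^2 R^2}{p^2+q^2}$, this cancels the leading term, and $O(1+(R\epsilon)^2)$ collapses to $O(1)$; hence $S_\alpha(\epsilon,R) = R/\sqrt{p^2+q^2} + O(1)$. Collecting the four ranges yields the corollary. I do not anticipate a real obstacle: the only subtlety is tracking which of $R$, $R^{2-2\lambda}$, $\beta R^\lambda$, $R^{2-3\lambda}$, $1$ is largest on each subinterval, and remembering that Theorem \ref{Slow Sectors}, not Theorem \ref{rational case thm}, is the efficient tool on $[0,1/2]$.
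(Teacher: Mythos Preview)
Your proposal is correct and follows essentially the same route as the paper: the corollary is stated as a summary of the discussion preceding it, which likewise derives \eqref{rational slow shrinking} from Theorem~\ref{rational case thm}, invokes Theorem~\ref{Slow Sectors} on $0\le\lambda\le 1/2$, compares $\beta R^{\lambda}$ against $R^{2-2\lambda}$ to locate the crossover at $\lambda=2/3$, and uses the simplification $S_\alpha(\epsilon,R)=R/\sqrt{p^2+q^2}+O(1)$ when $\epsilon=o(R^{-1})$. Your write-up is in fact slightly more explicit than the paper's about why the fractional-part term cancels the leading term once $\lambda>1$.
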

Finally, we consider the case $\epsilon \asymp R^{-1}$ (e.g. $\epsilon R = c$, for some $c \in \R_{> 0})$.  Then Theorem \ref{rational case thm} yields
\[S_{\alpha}(\epsilon,R) =
\gamma  R+O(1),\]
where
\[\gamma:=\frac{\epsilon q^{2}R }{p^{2}+q^{2}} +\frac{1}{\epsilon q^{2} R}\left\{\frac{\epsilon q^2 R }{\sqrt{p^{2}+q^{2}}}\right\}\left(1-\left\{\frac{\epsilon q^2 R }{\sqrt{p^{2}+q^{2}}}\right\}\right).\]
In particular, whenever $\epsilon < \frac{\sqrt{p^{2}+q^{2}}}{q^{2} R}$, the only points which contribute to $S_{\alpha}(\epsilon,R)$ are those which lie precisely on the line $y = \alpha x$, and
we find that
\[\gamma = \frac{1}{\sqrt{p^{2}+q^{2}}}.\]
We moreover note that $S_{\alpha}(\epsilon,R)$ is asymptotic to $\textnormal{Area}(\textnormal{Sect}_{\alpha, \epsilon}(R))$
if and only if $\gamma = \frac{\epsilon q^{2}R}{p^{2}+q^{2}}$, i.e. if and only if $ \epsilon $
is an integer multiple of $\frac{\sqrt{p^{2}+q^{2}}}{q^{2} R}$.

\subsection{``Very Quickly" Shrinking Sectors}
While in the range $R^{-1-\eta} \ll \epsilon  \ll R^{-(1+1/\eta)}$ we are unable to obtain an asymptotic formula for $S_{\alpha}(\epsilon,R)$, for sectors that shrink even more quickly the situation becomes rather trivial.  Specifically, whenever $\epsilon = o(R^{-1-\eta}) $ (e.g. $\lambda >1+ \eta$), we show that $S_{\alpha}(\epsilon,R) =0$ for sufficiently large $R$:
\begin{proposition}\label{quickly shrinking}
Let $\alpha \in \R$ be irrational of finite type $\eta$, and suppose that  $\epsilon = o(R^{-1-\eta}) $.  Then there exists $R_{0} > 0$ such that for all $R > R_{0}$,
\[S_{\alpha}(\epsilon,R) = 0.\]
\end{proposition}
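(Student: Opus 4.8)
The plan is a short direct argument by contradiction, driven entirely by the Diophantine inequality defining the finite type $\eta$. Suppose, for contradiction, that for arbitrarily large $R$ there is a lattice point $(m,n) \in \Z_{>0}\times\Z$ lying in $\textnormal{Sect}_{\alpha,\epsilon}(R)$. First I would extract the two constraints such a point must satisfy: the radial bound $m^{2}+n^{2}\le R^{2}$ forces $1\le m\le R$, and the angular condition $n/x \in I_{\epsilon}(\alpha)$ with $x=m$ rewrites (as in the definition of $S_{\alpha}(\epsilon,R)$) as $|n-\alpha m|<m\epsilon$.

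The key step is to bound $|n-\alpha m|$ from below using Definition \ref{irrationality measure def}. Applying it with the pair $(p,q)=(n,m)$ — legitimate since $m\ge 1$ — gives $|\alpha - n/m| > c/m^{1+\eta}$ for the constant $c=c(\alpha)>0$; multiplying through by $m$ and using $m\le R$ yields
\[
|n-\alpha m| \;>\; \frac{c}{m^{\eta}} \;\ge\; \frac{c}{R^{\eta}}.
\]
Combining this with the angular bound $|n-\alpha m|<m\epsilon\le R\epsilon$ shows that the existence of such a lattice point forces $\epsilon R^{1+\eta} > c$.

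To conclude, observe that the hypothesis $\epsilon=o(R^{-1-\eta})$ is precisely the statement $\epsilon R^{1+\eta}\to 0$ as $R\to\infty$, so there is some $R_{0}>0$ with $\epsilon R^{1+\eta}<c$ for every $R>R_{0}$. For all such $R$ the inequality derived above is impossible, hence $\textnormal{Sect}_{\alpha,\epsilon}(R)$ contains no integer lattice point and $S_{\alpha}(\epsilon,R)=0$. I do not anticipate a genuine obstacle here; the only points requiring a moment's care are that the type inequality may be applied to $n/m$ even when this fraction is not in lowest terms (passing to reduced form only shrinks the denominator, which only strengthens the bound), and that the constant $c$ and the resulting threshold $R_{0}$ are independent of $R$.
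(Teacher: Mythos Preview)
Your argument is correct and essentially identical to the paper's: both apply the finite-type inequality $|\alpha - n/m| > c/m^{1+\eta}$ to any candidate lattice point with $1\le m\le R$, deduce that its presence would force $\epsilon R^{1+\eta}>c$, and invoke $\epsilon = o(R^{-1-\eta})$ to rule this out for all large $R$. The only cosmetic difference is that you phrase it as a proof by contradiction and add the (unnecessary but harmless) remark about reduced fractions, whereas the paper states the inequality directly for all pairs $(p,q)$.
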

In particular, if $\alpha$ is a Diophantine irrational, then for sufficiently large $R$, $S_{\alpha}(\epsilon,R) = 0$ whenever $\epsilon = o(R^{-t})$ for some $ t>2 $ (e.g. $\lambda > 2$).

\subsection{Structure of Paper} The remainder of this paper is structured as follows.
In Section \ref{sec:slow_sectors} we apply a simple geometric argument to compute $S_{\alpha}(\epsilon,R)$ in the case that $\epsilon \rightarrow 0$ at a rate slower than $1/R$. In Section \ref{sec:triangle_approx} we approximate $S_{\alpha}(\epsilon,R)$ by 
$\Delta_{\alpha}(\epsilon,R)$, i.e. by the number of lattice points in a triangle whose two long edges lie along the lines $y=\left(\alpha\pm\epsilon\right)x$. In Section \ref{sec:irrational_slopes} we then apply this approximation to compute $S_{\alpha}(\epsilon,R)$ when $\alpha \in \R$ is irrational of finite type; and in Section \ref{sec:rational_slopes} we address the case when $\alpha \in \Q$ is rational.  Finally, in Section \ref{sec:very_quick_sectors}, we address the case in which $\text{Sect}_{\alpha,\epsilon}(R)$ shrinks ``very quickly", i.e. when $\epsilon \rightarrow 0$ at a rate faster than $1/R^{1+\eta}$.

\section{Lattice Points in Slowly Shrinking Sectors}\label{sec:slow_sectors}
In this section we provide a proof of Theorem \ref{Slow Sectors}, namely a count for $S_{\alpha}(\epsilon,R)$ when $\epsilon R \rightarrow \infty$ as $R \rightarrow \infty$.  The proof is an easy adaptation of the elementary geometric argument applied in the classical Gauss circle problem.  As evidenced by the proof, this argument remains valid for slowly shrinking sectors.

\begin{proof}[Proof of Theorem \ref{Slow Sectors}]
For each $z \in \Z^{2} \cap \textnormal{Sect}_{\alpha,\epsilon}(R)$, let $\square_{z}$ denote a square-box of unit area, centred at the point $z$.  Then
\[S_{\alpha}(\epsilon,R) = \textnormal{Area}\left(\bigcup_{z \in \Z^{2} \cap \textnormal{Sect}_{\alpha,\epsilon}(R)}\square_{z}\right),\]
i.e. $S_{\alpha}(\epsilon,R)$ is equal to the area formed by the union of such boxes.  Note, moreover, that if $w \in \square_{z}$ for some $z \in \Z^{2} \cap \textnormal{Sect}_{\alpha,\epsilon}(R)$, then
\[\textnormal{dist}(w,\textnormal{Sect}_{\alpha,\epsilon}(R)) \leq \sqrt{2}/2,\]
i.e. the distance between $w$ and $\textnormal{Sect}_{\alpha,\epsilon}(R)$ is bounded by $\sqrt{2}/2$.  We therefore define a wider sector, $\textnormal{Sect}^{+}_{\alpha,\epsilon}(R')$, with the same open angle and direction as $\textnormal{Sect}_{\alpha,\epsilon}(R)$, but extended by a distance of $\sqrt{2}/2$ on all sides, so that
\begin{equation*}
\bigcup_{z \in \Z^{2} \cap \textnormal{Sect}_{\alpha,\epsilon}(R)}\square_{z} \subseteq \textnormal{Sect}^{+}_{\alpha,\epsilon}(R').
\end{equation*}

To construct $\textnormal{Sect}^{+}_{\alpha,\epsilon}(R')$ explicitly, we expand $\textnormal{Sect}_{\alpha,\epsilon}(R)$ by drawing parallel lines distanced $d=\sqrt{2}/2$ away from each of its two respective straight edges.  Let $x$ denote the distance between their point of intersection and the origin.  Note that
\[x \cdot \sin \frac{\theta}{2} = \frac{\sqrt{2}}{2},\]
from which we obtain
\[x = \frac{\sqrt{2}}{2 \cdot \sin \frac{\theta}{2} } = \frac{\sqrt{2}}{\theta+O(\theta^{3})} =\frac{\sqrt{2}}{\theta}+O(\theta).\]
We therefore set the radius of our desired sector, $\textnormal{Sect}^{+}_{\alpha,\epsilon}(R')$, to be equal to
\[R' = R + \frac{\sqrt{2}}{2 \cdot \sin \frac{\theta}{2} }+ \frac{\sqrt{2}}{2} = R + \frac{\sqrt{2}}{\theta}+ O(1),\]
which yields
\begin{align*}
\textnormal{Area}\left(\textnormal{Sect}^{+}_{\alpha,\epsilon}(R')\right) &= \frac{\theta}{2}  \cdot (R')^{2} = \frac{\theta}{2}  \cdot \left(R + \frac{\sqrt{2}}{\theta}+ O(1)\right)^{2}\\
&=  \textnormal{Area}(\textnormal{Sect}_{\alpha, \epsilon}(R))+O\left(R\right),
\end{align*}
upon noting that $ \theta \asymp \epsilon $, so that $ \theta^{-1}=o(R) $. Thus
\begin{align}\label{theta upper bound}
S_{\alpha}(\epsilon,R) &= \textnormal{Area}\left(\bigcup_{z \in \Z^{2} \cap \textnormal{Sect}_{\alpha,\epsilon}(R)}\square_{z}\right) \leq \textnormal{Area}\left(\textnormal{Sect}^{+}_{\alpha,\epsilon}(R')\right) \nonumber \\
&= \textnormal{Area}(\textnormal{Sect}_{\alpha, \epsilon}(R))+O\left(R\right).
\end{align}
To obtain a lower bound for $S_{\alpha}(\epsilon,R)$, we similarly construct a sector, denoted by $\textnormal{Sect}^{-}_{\alpha,\epsilon}(R'')$, with the same open angle and direction as $\textnormal{Sect}_{\alpha,\epsilon}(R)$, but now \textit{shrunk} by a distance of $\sqrt{2}/2$ on all sides, of radius \[ R'' = R - \frac{\sqrt{2}}{2 \cdot \sin \frac{\theta}{2} }- \frac{\sqrt{2}}{2},\] which we note is clearly possible since $ \theta^{-1} = o(R) $.   Any point $w \in \textnormal{Sect}^{-}_{\alpha,\epsilon}(R'')$ is within a distance of at most $\sqrt{2}/2$ from some lattice point $z$, which, by construction, must lie in $\textnormal{Sect}_{\alpha,\epsilon}(R)$. It follows that
\[\textnormal{Sect}^{-}_{\alpha,\epsilon}(R'') \subseteq \left(\bigcup_{z \in \Z^{2} \cap \textnormal{Sect}_{\alpha,\epsilon}(R)}\square_{z}\right).\]
Using a similar analysis to that above, we find that
\[\textnormal{Area} \left(\textnormal{Sect}^{-}_{\alpha,\epsilon}(R'')\right) = \frac{\theta}{2}  \cdot R^{2}+O\left(R\right),\]
and therefore
\begin{equation}\label{theta lower bound}
S_{\alpha}(\epsilon,R) \geq \textnormal{Area} \left(\textnormal{Sect}^{-}_{\alpha,\epsilon}(R'')\right) =  \textnormal{Area}(\textnormal{Sect}_{\alpha, \epsilon}(R))+O\left(R\right).
\end{equation}
Combining (\ref{theta upper bound}) and (\ref{theta lower bound}) we conclude that

\begin{equation}
S_{\alpha}(\epsilon,R) =  \textnormal{Area}(\textnormal{Sect}_{\alpha, \epsilon}(R))+O\left(R\right),
\end{equation}
as desired.
\end{proof}
\section{Approximating Sectors by Triangles}\label{sec:triangle_approx}
In this section we approximate $S_{\alpha}(\epsilon,R)$ by considering lattice points in a triangle, namely the summation
\[\Delta_{\alpha}(\epsilon,R):=\sum _{1 \leq m \leq \frac{R}{\sqrt{1+\alpha^{2}}}}\#\{n \in \Z: m(\alpha-\epsilon)< n < m(\alpha+\epsilon)\}\]
We have the following lemma:

\begin{lemma}\label{S and Delta}
Assume that $\epsilon \to 0$.  Then
\[S_{\alpha}(\epsilon,R) = \Delta_{\alpha}(\epsilon,R)+O\left( 1+(R\epsilon)^{2}\right)\]
In particular, if $\epsilon = O(R^{-1})$, then
\[S_{\alpha}(\epsilon,R) = \Delta_{\alpha}(\epsilon,R)+O(1).\]
\end{lemma}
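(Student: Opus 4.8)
The plan is to compare the two counts region by region. Recall that $S_{\alpha}(\epsilon,R)$ counts lattice points $(m,n)$ with $m > 0$, $m^2 + n^2 \le R^2$, and $|n - \alpha m| < m\epsilon$, while $\Delta_{\alpha}(\epsilon,R)$ counts lattice points $(m,n)$ with $1 \le m \le R/\sqrt{1+\alpha^2}$ and $m(\alpha - \epsilon) < n < m(\alpha + \epsilon)$. Both impose the same vertical strip condition $|n - \alpha m| < m\epsilon$ (note that $m \ge 1$ automatically in both, since $m > 0$ is an integer). So the symmetric difference of the two counted sets is governed entirely by the discrepancy between the disk condition $m^2 + n^2 \le R^2$ and the interval condition $m \le R/\sqrt{1+\alpha^2}$. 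First I would observe that the line $y = \alpha x$ meets the circle $x^2 + y^2 = R^2$ at $x = R/\sqrt{1+\alpha^2}$; hence for a point $(m,n)$ lying in the thin strip $|n - \alpha m| < m\epsilon$, being inside the disk and having $m \le R/\sqrt{1+\alpha^2}$ agree \emph{except} for points near the circular arc, i.e. with $m = R/\sqrt{1+\alpha^2} + O(\text{something small})$.

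Next I would quantify that ``something small''. For a point with $n = \alpha m + O(m\epsilon)$, we have $m^2 + n^2 = m^2(1 + \alpha^2) + O(m^2 \epsilon)$ (using $\epsilon \to 0$, so $\epsilon^2$ and $\epsilon$ terms are dominated by the linear one, and $|\alpha| m \cdot m\epsilon$ is the main cross term), so the disk condition $m^2 + n^2 \le R^2$ becomes $m \le R/\sqrt{1+\alpha^2} \cdot (1 + O(\epsilon)) = R/\sqrt{1+\alpha^2} + O(R\epsilon)$. Therefore a lattice point counted by one of $S_{\alpha}, \Delta_{\alpha}$ but not the other must have its $m$-coordinate in an interval of length $O(1 + R\epsilon)$ around $R/\sqrt{1+\alpha^2}$ (the $+1$ absorbs the ``$O(1)$ in $m$'' boundary fuzz when $R\epsilon$ is small, and also guarantees the estimate is nonvacuous when the strip has width $<1$). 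For each such value of $m$, the strip $|n - \alpha m| < m\epsilon$ has length $2m\epsilon = O(R\epsilon)$, hence contains $O(1 + R\epsilon)$ integers $n$. Multiplying, the symmetric difference has size $O\big((1 + R\epsilon)^2\big) = O(1 + (R\epsilon)^2)$, which is exactly the claimed error. The refinement for $\epsilon = O(R^{-1})$ is then immediate: $R\epsilon = O(1)$, so $O(1 + (R\epsilon)^2) = O(1)$.

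I expect the only real subtlety to be bookkeeping near the arc: one must check that no lattice point lies inside the disk with $m$ significantly \emph{larger} than $R/\sqrt{1+\alpha^2}$ — this is where the strip condition is essential, since a point far from the line $y=\alpha x$ could be inside the disk with large $m$, but such a point is excluded from both counts. One should also be slightly careful that $\Delta_{\alpha}$ uses strict inequalities $m(\alpha-\epsilon) < n < m(\alpha+\epsilon)$ while $S_{\alpha}$ uses $|n - \alpha m| < m\epsilon$, but these define the same set of integer points, so there is no discrepancy there. Beyond that, everything is a direct count, and the constants in the $O$-notation depend only on $\alpha$.
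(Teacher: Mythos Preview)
Your proposal is correct and follows essentially the same approach as the paper: both arguments localize the symmetric difference between the sector count and the triangle count to those $m$ in a window of length $O(1+R\epsilon)$ around $R/\sqrt{1+\alpha^2}$, and then bound the number of admissible $n$ for each such $m$ by $O(1+R\epsilon)$. The only cosmetic difference is that the paper introduces an intermediate quantity $S^{1}_{\alpha}(\epsilon,R)$ using the exact endpoint $R/\sqrt{1+(\alpha-\epsilon)^2}$ and splits the estimate into two steps, whereas you pass directly to the symmetric difference via the Taylor expansion $m^2+n^2 = m^2(1+\alpha^2)+O(m^2\epsilon)$; the substance is the same.
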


\begin{proof}
Assume $\alpha > 0$, as the proof for the cases $\alpha = 0$ and $\alpha < 0$ follow similarly.  Suppose $(m,n) \in S_{\alpha}(\epsilon,R)$.  Then $m^{2}+n^{2}\leq R^{2}$ and $n > m(\alpha-\epsilon) > 0$ (which holds for sufficiently small $ \epsilon $) together imply
\begin{align*}
 m^{2}\left(1+(\alpha-\epsilon)^{2})\right)\leq  R^{2},
\end{align*}
i.e. that
\[m \leq \frac{R}{\sqrt{1+(\alpha-\epsilon)^{2}}}.\]
We may therefore write
\[S_{\alpha}(\epsilon,R) = S^{1}_{\alpha}(\epsilon,R)-S^{2}_{\alpha}(\epsilon,R),\]
where
\[S^{1}_{\alpha}(\epsilon,R) := \sum_{1 \leq m \leq \frac{R}{\sqrt{1+(\alpha-\epsilon)^{2}}}}\#\{n: m(\alpha-\epsilon)< n < m(\alpha+\epsilon)\}\]
and

\[S^{2}_{\alpha}(\epsilon,R) := \sum_{1 \leq m \leq \frac{R}{\sqrt{1+(\alpha-\epsilon)^{2}}}}\#\{n: m(\alpha-\epsilon)< n < m(\alpha+\epsilon), m^{2}+n^{2}> R^{2}\}.\]
Let us first estimate the size of $S^{2}_{\alpha}(\epsilon,R)$.  Note that if $m^{2}+n^{2}> R^{2}$ and $m(\alpha-\epsilon)< n < m(\alpha+\epsilon)$, then $m^{2}(1+(\alpha+\epsilon)^{2})>R^{2}$, and therefore $m > R/\sqrt{1+(\alpha+\epsilon)^{2}}$.  Moreover, since the length of the interval $(m(\alpha-\epsilon),m(\alpha+\epsilon))$ is $2m \epsilon \leq 2R \epsilon$, we find that, for any $m \in \N$, there exist at most $O(1+R\epsilon)$ integers $n \in \Z$ such that
\[m(\alpha-\epsilon) < n < m(\alpha+\epsilon).\]
Thus
\begin{align*}
S^{2}_{\alpha}(\epsilon,R) &\ll \sum_{\frac{R}{\sqrt{1+(\alpha+\epsilon)^{2}}} < m \leq \frac{R}{\sqrt{1+(\alpha-\epsilon)^{2}}}}(1+R\epsilon)\\
&\leq (1+R\epsilon)\left(1+
\frac{R}{\sqrt{1+(\alpha-\epsilon)^{2}}}-\frac{R}{\sqrt{1+(\alpha+\epsilon)^{2}}}\right).
\end{align*}
Note furthermore that
\begin{align*}
\sqrt{1+(\alpha \pm \epsilon)^{2}} &= \sqrt{1+\alpha^2}(1+ O(\epsilon)).
\end{align*}
It follows that

\begin{align*}
\frac{R}{\sqrt{1+(\alpha-\epsilon)^{2}}}-\frac{R}{\sqrt{1+(\alpha+\epsilon)^{2}}}&=\frac{R}{\sqrt{1+\alpha^{2}}}\left(1+O(\epsilon)\right)-\frac{R}{\sqrt{1+\alpha^{2}}}\left(1+O(\epsilon)\right)\\
&=O(R \epsilon).
\end{align*}
Hence
\[S^{2}_{\alpha}(\epsilon,R) \ll (1+R\epsilon)^{2} \ll 1+(R\epsilon)^{2},\]
from which we obtain that

\[S_{\alpha}(\epsilon,R) = S^{1}_{\alpha}(\epsilon,R)+O\left(1+(R\epsilon)^{2}\right).\]
Next, we wish to show that
\[S^{1}_{\alpha}(\epsilon,R)=\Delta_{\alpha}(\epsilon,R)+O\left(1+(R\epsilon)^{2}\right).\]
Indeed, note that
\begin{equation}\label{diff S and Delta}
S^{1}_{\alpha}(\epsilon,R) - \Delta_{\alpha}(\epsilon,R)= \sum_{\frac{R}{\sqrt{1+\alpha^{2}}}< m \leq \frac{R}{\sqrt{1+(\alpha-\epsilon)^{2}}}}\#\{n: m(\alpha-\epsilon)< n < m(\alpha+\epsilon))\},
\end{equation}
and that each summand in (\ref{diff S and Delta}) is O$(1+R\epsilon)$. It follows that
\begin{align*}
S^{1}_{\alpha}(\epsilon,R) - \Delta_{\alpha}(\epsilon,R) &\ll (1+R\epsilon)\cdot \left(1+\frac{R}{\sqrt{1+(\alpha-\epsilon)^{2}}}-\frac{R}{\sqrt{1+\alpha^{2}}}\right)\\
&\ll (1+R\epsilon)^{2} \ll 1+(R\epsilon)^{2},
\end{align*}
as desired.
\end{proof}

\section{Sectors about Irrational Slopes}\label{sec:irrational_slopes}
In this section we provide a proof of Theorem \ref{main theorem}, namely a count for $S_{\alpha}(\epsilon,R)$ when $\alpha \in \R$ is irrational of finite type.\\
\\
Let $\alpha \in \R$ be irrational.  For any rational $p/q \in \Q$, we define $\delta:=\alpha - p/q$.  For the purposes of this proof, we will moreover assume that $|\delta|< \epsilon/2$, which, in particular, implies that $ \delta-\epsilon<0 $ and $ \epsilon+\delta>0 $. We then write
\begin{align*}
\Delta_{\alpha}(\epsilon,R)&=\{(m,n) \in \Z^{2}:|n/m - \alpha|< \epsilon, \hspace{2mm} 1 \leq m \leq R/\sqrt{1+\alpha^{2}}\}\\
&=\{(m,n) \in \Z^{2}:-\epsilon + \delta <n/m - p/q < \epsilon + \delta, \hspace{2mm} 1 \leq m \leq R/\sqrt{1+\alpha^{2}}\}\\
&=\{(m,n) \in \Z^{2}: mq(\delta-\epsilon) <nq - mp < (\epsilon + \delta)mq, \hspace{2mm} 1 \leq m \leq R/\sqrt{1+\alpha^{2}}\}.
\end{align*}
Let $d= n q -m p$, so that 
\[(\delta-\epsilon)m q < d < (\epsilon + \delta)mq.\]
Together with the conditions on $m$, this implies that
\[\frac{(\delta-\epsilon)q R}{\sqrt{1+\alpha^{2}}} \leq d \leq \frac{(\epsilon + \delta)q R}{\sqrt{1+\alpha^{2}}}.\]
When $d > 0$, the condition on $m$ is equivalent to
\[\frac{d}{(\delta+\epsilon)q} < m \leq \frac{R}{\sqrt{1+\alpha^{2}}},\]
while when $d < 0$, the condition is then
\[\frac{d}{(\delta - \epsilon)q}< m \leq \frac{R}{\sqrt{1+\alpha^{2}}}.\]
Partitioning with respect to $d$, we then write

\begin{equation}\label{eq:Delta partition}
\Delta_{\alpha}(\epsilon,R) = \Delta_{\alpha}^{+}(\epsilon,R)+\Delta_{\alpha}^{0}(\epsilon,R)+\Delta_{\alpha}^{-}(\epsilon,R),
\end{equation}
with
\begin{align*}
\Delta_{\alpha}^{+}(\epsilon,R)&:=\sum_{0 < d \leq \frac{(\epsilon +\delta)Rq}{\sqrt{1+\alpha^{2}}}}\sum_{\substack{\frac{d}{(\epsilon + \delta)q}< m \leq \frac{R}{\sqrt{1+\alpha^{2}}} \\ m \equiv -d \bar{p} \; (q)}} 1\\
\Delta^{-}_{\alpha}(\epsilon,R)&:= \sum_{ \frac{(\delta -\epsilon)Rq}{\sqrt{1+\alpha^{2}}} \leq d < 0} \sum_{\substack{\frac{d}{q( \delta-\epsilon)}< m \leq \frac{R}{\sqrt{1+\alpha^{2}}} \\ m \equiv -d\bar{p} \; (q)}} 1,\\
\Delta^{0}_{\alpha}(\epsilon,R)&:= \sum_{\substack{1 \leq m \leq \frac{R}{\sqrt{1+\alpha^{2}}} \\ m \equiv 0 \; (q)}}1,
\end{align*}
where $ \bar{p} $ denotes the inverse of $ p $ modulo $ q $.  Upon recalling that
\[\sum_{0 < d\leq x}d = \frac{\lfloor x \rfloor \left( \lfloor x \rfloor +1 \right) }{2} = \frac{1}{2}\left(x+O(1)\right)^2,\]
we see that
\begin{align*}
\Delta_{\alpha}^{+}(\epsilon,R)&=\sum_{0 < d \leq \frac{(\epsilon +\delta)Rq}{\sqrt{1+\alpha^{2}}}}\left(\frac{1}{q}\left(\frac{R}{\sqrt{1+\alpha^{2}}}-\frac{d}{(\epsilon + \delta)q}\right)+O(1)\right)\\
&=\frac{R}{q\sqrt{1+\alpha^{2}}}\left( \frac{(\epsilon + \delta)Rq}{\sqrt{1+\alpha^2}}+O(1)\right)\\
&\phantom{=}-\frac{1}{2(\epsilon+\delta)q^{2}}\left( \frac{(\epsilon +\delta) R q}{\sqrt{1+\alpha^{2}}}+O\left(1\right) \right)^2 +O(\epsilon q R )\\
&=\frac{1}{2}\frac{R^{2}(\epsilon+\delta)}{1+\alpha^{2}}+O\left(\frac{R}{q}+\frac{1}{\epsilon q^2}+\epsilon q R\right).
\end{align*}
Similarly, we compute
\begin{align*}
\Delta^{-}_{\alpha}(\epsilon,R)
&= \sum_{\frac{(\delta -\epsilon)Rq}{\sqrt{1+\alpha^{2}}} \leq d < 0}\left(\frac{1}{q}\left(\frac{R}{\sqrt{1+\alpha^{2}}}-\frac{d}{(\delta - \epsilon)q}\right)+O(1)\right)\\
&= \sum_{0 < d \leq \frac{(\epsilon - \delta)Rq}{\sqrt{1+\alpha^{2}}}}\left(\frac{1}{q}\left(\frac{R}{\sqrt{1+\alpha^{2}}}-\frac{d}{(\epsilon - \delta)q}\right)+O(1)\right)\\
&=\frac{1}{2}\frac{R^{2}(\epsilon-\delta)}{1+\alpha^{2}}+O\left(\frac{R}{q}+\frac{1}{\epsilon q^2}+\epsilon q R\right).
\end{align*}
Finally, we note that
\begin{equation}\label{eq:0 term contribution}
\Delta_{\alpha}^{0}(\epsilon, R)= \frac{R}{q\sqrt{1+\alpha^{2}}}+O(1).
\end{equation}
It then follows from \eqref{eq:Delta partition} that
\begin{equation}\label{eq:Delta in q}
\Delta_{\alpha}(\epsilon, R) = \frac{\epsilon R^{2}}{(1+\alpha^{2})}+O\left(\frac{R}{q}+\frac{1}{\epsilon q^2}+\epsilon q R+1\right).
\end{equation}

\subsection{Choosing an Appropriate Convergent}
Suppose $\alpha \in \R$ is irrational of finite type $\eta$, and let $\{p_{i}/q_{i}\}_{i=1}^{\infty}$ denote the sequence of convergents to the continued fraction of $\alpha$.  Upon choosing an appropriate pair $\{p_{i}/q_{i}\}$, we are able to proceed with a proof of Theorem \ref{main theorem}:

\begin{proof}[Proof of Theorem \ref{main theorem}]
For any $X := X(R)$, there exists a unique $i$ such that $q_{i} \leq X < q_{i+1}$.  There moreover exists a $c=c(\alpha)> 0$ such that
\begin{equation}\label{eq:partial fraction bounds}
\frac{c}{q_{i}^{1+\eta}}<\big | \alpha - \frac{p_{i}}{q_{i}} \big | < \frac{1}{q_{i}q_{i+1}}.
\end{equation}
Hence
\[X < q_{i+1} < \frac{1}{c}\cdot q_{i}^{\eta},\]
which further implies that
\[X^{\frac{1}{\eta}} < q_{i+1}^{\frac{1}{\eta}} < c^{-\frac{1}{\eta}} \cdot q_{i}.\]
In other words, there exists a constant $C > 0$ such that
\[X^{\frac{1}{\eta}} < C \cdot q_{i}.\]
Let $ p=p_i$ and $q=q_i.$  By \eqref{eq:partial fraction bounds}, it follows that
\begin{equation}\label{eq:delta_bounds}
|\delta| = \big | \alpha - \frac{p_{i}}{q_{i}} \big | < \frac{1}{q_{i}q_{i+1}} < \frac{1}{q_{i}X} < C\cdot \frac{1}{X^{1+1/\eta}}.
\end{equation}
To ensure that $|\delta| < \epsilon/2$, we choose $X$ such that
\begin{equation}\label{eq:epsilon and X}
C \cdot X^{-(1+1/\eta)}\leq \frac{\epsilon}{2},
\end{equation}
namely, we subject $X$ to the restriction
 
\begin{equation}\label{eq:restrictions for asymptotic}
\epsilon^{-\frac{1}{1+1/\eta}} \ll X.
\end{equation}

To \textit{optimize} our error term, we seek a choice of $X$, subject to the restriction \eqref{eq:restrictions for asymptotic}, which minimizes the value of
\[O\left(\frac{R}{q}+\frac{1}{\epsilon q^{2}}+ \epsilon q R+1\right).\]
Note first that by \eqref{eq:delta_bounds} and \eqref{eq:epsilon and X},
\begin{equation}\label{eq:error bound 0}
\frac{1}{qX} \leq C \cdot \frac{1}{X^{1+1/\eta}} \leq \frac{\epsilon}{2},
\end{equation}
which in turn implies that
\[\frac{1}{q^{2}X^{2}} \le  \frac{\epsilon^{2}}{4},\]
and therefore that
\begin{equation}
\frac{1}{\epsilon q^{2}} \le  \frac{\epsilon X^{2}}{4}.\label{eq:error bound 1}
\end{equation}
Similarly, since \eqref{eq:error bound 0} implies $q^{-1} \leq \epsilon X/2,$ we find that
\begin{equation}\label{eq:error bound 2}
\frac{R}{q} = O\left(\epsilon X R\right).
\end{equation}
Next, since $q \le X$, it follows that
\begin{equation}\label{eq:error bound 3}
\epsilon q R = O\left(\epsilon X R\right),
\end{equation}
and finally it similarly follows from \eqref{eq:error bound 0} that
\begin{equation}\label{eq:error bound 4}
1  \le \frac{X}{q} = O\left( \epsilon X^2 \right).
\end{equation}
By \eqref{eq:error bound 1}, \eqref{eq:error bound 2}, \eqref{eq:error bound 3}, and \eqref{eq:error bound 4}, we have that
\[O\left(\frac{R}{q}+\frac{1}{\epsilon q^{2}}+ \epsilon q R+1\right) = O\left(\epsilon X^2 + \epsilon X R\right).\]
We thus choose the minimal possible value for $ X $, namely $ X \asymp \epsilon^{-\frac{1}{1+1/\eta}}$, which is moreover $ o(R) $ by the assumption that $ \epsilon R^{1+1/\eta} \to \infty$.  In particular,
\begin{equation}
\epsilon X^2  + \epsilon X R \ll \epsilon X R \ll  \epsilon^{\frac{1}{1+\eta}} R.\label{eq:FinalError}
\end{equation}
By \eqref{eq:Delta in q} and \eqref{eq:FinalError}, we conclude that
\begin{equation}\label{eq:Delta_irrational_asymptotic}
\Delta_{\alpha}(\epsilon,R) = \frac{\epsilon R^{2}}{1+\alpha^{2}}+O\left(\epsilon^{\frac{1}{1+\eta}} R\right).
\end{equation}
Theorem \ref{main theorem} now follows directly from \eqref{eq:Delta_irrational_asymptotic} and Lemma \ref{S and Delta}.

\end{proof}

\section{Sectors about Rational Slopes}\label{sec:rational_slopes}
In this section we provide a proof of Theorem \ref{rational case thm}, namely a count for $S_{\alpha}(\epsilon,R)$ when $\alpha \in \Q$.  The proof proceeds similarly to that of Theorem \ref{main theorem}, upon setting $\delta = 0$:
\begin{proof}[Proof of Theorem \ref{rational case thm}]
Recall that $\alpha = p/q$, where $(p,q)=1$.  Note that
\begin{align*}
\Delta_{\alpha}(\epsilon,R) &= \#\left\{(m,n) \in \Z^{2}: m(p/q-\epsilon)< n < m(p/q+\epsilon): 1 \leq m \leq \frac{qR}{\sqrt{p^{2}+q^{2}}}\right\}\\
&= \#\left\{(m,n) \in \Z^{2}: \left|nq -mp\right|<mq\epsilon: 1 \leq m \leq \frac{qR}{\sqrt{p^{2}+q^{2}}}\right\}.
\end{align*}

Let $d = nq -mp$, and note that $|d|< mq\epsilon$ implies
\[|d| \leq \frac{\epsilon q^{2} R}{\sqrt{p^{2}+q^{2}}},\]
as well as that
\[\frac{|d|}{q\epsilon} < m.\]
Partitioning with respect to $d$, we then write
\begin{align*}
\Delta_{\alpha}(\epsilon,R)&= \sum_{|d|\leq \frac{\epsilon q^{2}R}{\sqrt{p^{2}+q^{2}}}} \#\left\{(m,n) \in \Z^{2}: nq -mp=d,  \frac{|d|}{q\epsilon} < m \leq \frac{qR}{\sqrt{p^{2}+q^{2}}}\right\}\\
&= \sum_{|d|\leq \frac{\epsilon q^{2} R}{\sqrt{p^{2}+q^{2}}}}\sum_{\substack{\frac{|d|}{q\epsilon}<m \leq \frac{Rq}{\sqrt{p^{2}+q^{2}}} \\ m \equiv -d \bar{p} \;(q)}}1,
\end{align*}
where $ \bar{p} $ denotes the inverse of $ p $ modulo $ q $  (in particular, if $\epsilon q^{2} R/\sqrt{p^{2}+q^{2}}< 1$, then the only contribution to $\Delta_{\alpha}(\epsilon,R)$ comes from the term $d=0$, i.e. points $(m,n) \in \Z^{2}$ lying precisely on the line $y = \alpha x$).   Upon setting
\[A:= \frac{R}{\sqrt{p^{2}+q^{2}}} \quad \textnormal{ and } \quad B:=\epsilon q^{2},\]
and recalling that
\[\sum_{|d|\leq x}|d| = \lfloor x \rfloor \left(\lfloor x \rfloor + 1\right),\]
it follows that
\begin{align*}
&\Delta_{\alpha}(\epsilon,R)=\sum_{|d|\leq AB}\left(A-\frac{|d|}{B}+O(1)\right)\\
&=A\left(1+2\left\lfloor AB\right\rfloor\right)-\frac{1}{B}\left\lfloor AB \right\rfloor \left( \left\lfloor AB\right\rfloor+1 \right) +O(1+AB).
\end{align*}
We furthermore note that
\begin{align*}
&A\left(1+2\left\lfloor AB\right \rfloor\right) = 2A^{2}B+\left(1-2\left\{AB\right\}\right)A
\end{align*}
and similarly that
\begin{align*}
&\frac{1}{B}\left\lfloor AB \right\rfloor \left( \left\lfloor AB \right\rfloor+1\right)=\left( A -\frac{\left\{AB\right\}}{B}\right)\left(1+AB-\left\{AB\right\}\right)\\
&=A^{2}B+A\left(1-2\left\{AB\right\}\right) -\frac{\{AB\}}{B}\left(1-\{AB\}\right).
\end{align*}
Combining the above expressions we see that
\begin{align*}
\Delta_{\alpha}(\epsilon,R)&=A^2B + \frac{1}{B}\{AB\}\left(1-\{AB\}\right)+O(1+AB)\\
&=
\frac{\epsilon q^{2}R^{2}}{p^{2}+q^{2}} +\frac{1}{q^{2}\epsilon}\left\{\frac{\epsilon q^2 R}{\sqrt{p^{2}+q^{2}}}\right\}\left(1-\left\{\frac{\epsilon q^2 R}{\sqrt{p^{2}+q^{2}}}\right\}\right) + O(1+\epsilon R),
\end{align*}
and the desired result now follows from Lemma \ref{S and Delta}.
\end{proof}

\section{Very Quickly Shrinking Sectors}\label{sec:very_quick_sectors}
Finally, in this section we provide a proof of Proposition \ref{quickly shrinking}, namely that when $\alpha \in \R$ is irrational of finite type $\eta$ and $\epsilon = o(R^{-1-\eta})$, we find that $S_{\alpha}(\epsilon, R)=0$ for sufficiently large $R$:
\begin{proof}[Proof of Proposition \ref{quickly shrinking}]
Since $ \alpha $ is of finite type $ \eta$, there exists a constant $c=c(\alpha)>0$ such that for all $(p,q) \in \Z \times \Z_{>0}$,

\[\left|\alpha - \frac{p}{q}\right|> \frac{c}{q^{1+\eta}}.\]

Take $R_{0}$ sufficiently large such that for any $ R>R_0 $ we have
 \[\epsilon < \frac{c}{R^{1+\eta}}.\]
Then for any $R > R_{0}$, and any $(p,q) \in \Z \times \Z_{>0}$ with $0 < q \le  R$, we find that
\[\left|\alpha - \frac{p}{q}\right|> \frac{c}{q^{1+\eta}} \ge \frac{c}{R^{1+\eta}}>\epsilon_.\]

It follows that for all $R > R_{0}$ we have $S_{\alpha}(\epsilon,R) = 0$, as desired.
\end{proof}


\begin{thebibliography}{100}

\bibitem{BourgainWatt}
Bourgain, J., Watt, N. {\em Decoupling for perturbed cones and the mean square of $|\zeta
              (\frac 12+it)|$}. Int. Math. Res. Not. IMRN (2018), no. 17, 5219–5296.

\bibitem{HardyLittlewood1}          
Hardy, G. H., Littlewood, J. E. {\em Some Problems of Diophantine Approximation: The Lattice-Points of a Right-Angled Triangle}. Proc. London Math. Soc. (2) \textbf{20} (1921), no. 1, 15–36.          
          
\bibitem{HardyLittlewood2}
Hardy, G. H., Littlewood, J. E. {\em Some problems of Diophantine approximation: The lattice-points of a right-angled triangle}. (Second memoir.). Abh. Math. Sem. Univ. Hamburg \textbf{1} (1922), no. 1, 211–248.
          
\bibitem{Kuba}
Kuba, G. {\em On the number of lattice points in certain planar segments}. Math. Slovaca \textbf{53} (2003), no. 2, 173–187.

\bibitem{KuipersNiederreiter}
Kuipers L., Niederreiter, H. {\em Uniform Distribution of Sequences} Pure Appl. Math. Wiley-Intersci. Publ., John Wiley and Sons, New York-London-Sydney-Toronto, 1974.

\bibitem{MarklofStrombergsson}
Marklof, J., Strömbergsson, A. {\em The distribution of free path lengths in the periodic Lorentz gas and related lattice point problems}. Ann. of Math. (2) \textbf{172} (2010), no. 3, 1949–2033.

\bibitem{Nowak}
Nowak, W. G. {\em Über die Anzahl der Gitterpunkte in verallgemeinerten Kreissektoren}. Monatsh. Math. \textbf{87} (1979), no. 4, 297–307.

\bibitem{Sierpinski}
Sierpiński, W. {\em O pewnem zagadnieniu z rachunku funkcyj asymptotycznych}. Prace Mat. Fiz. \textbf{17} (1906), 77–118.

\end{thebibliography}
\end{document}